  \def\@Opargbegintheorem#1#2#3#4{#4\trivlist
      \item[]{#3#2\@thmcounterend\ }}%
  \def\@Opargbegintheorem#1#2#3#4{#4\trivlist
      \item[\hskip\labelsep{#3#1}]{#3(#2)\@thmcounterend\ }}%
 \newtheorem{thm}{Theorem}[subsection]%{\bfseries}{\itshape}
 \newtheorem{cor}[thm]{Corollary}%{\bfseries}{\itshape}
 \newtheorem{lem}[thm]{Lemma}%{\bfseries}{\itshape}
 \newtheorem{exmp}{Example}[subsection]%{\bfseries}{\itshape}
\providecommand{\keywords}[1]
{
	\small	
	\textbf{\text{Keywords:}} #1
}
\providecommand{\subclass}[1]
{
	\small	
	\textbf{\text{MSC(2020):}} #1
}
\begin{document}

\title{Some Generalizations of Totient Function with Elementary Symmetric Sums}
    \author[1]{Udvas Acharjee\thanks{udvasacharjee@sssihl.edu.in}}
\author[1]{N. Uday Kiran\thanks{Corresponding author: nudaykiran@sssihl.edu.in}}
\affil[1]{Department of Mathematics and Computer Science, Sri Sathya Sai Institute of Higher Learning, Puttaparthi, India.\thanks{Dedicated to Bhagawan Sri Sathya Sai Baba.}}

	\date{\today}
    \maketitle
\begin{abstract}

In a recent work, T\'oth introduced a $k$-dimensional generalized Euler totient function using the first and $k$-th elementary symmetric sums $e_1(\mathbf{x}) = x_1 + \cdots + x_k$ and $e_k(\mathbf{x}) = x_1 \cdots x_k$ respectively. We extend this framework by incorporating the second elementary symmetric sum $e_2(\mathbf{x}) = x_1x_2+x_1x_3+\dots+x_{k-1}x_k$, deriving new generalized totient functions with explicit product forms and Menon-type identities. We demonstrate that these totient functions enable counting solutions to restricted linear congruences $a_1 x_1 + \cdots + a_k x_k \equiv b \pmod{n}$ under $\gcd$-constraints on quadratic forms like $e_2(\mathbf{x})$, when $\gcd(b,n)=1$.
 
   \keywords{ Totient Function, Quadratic Forms over Finite Fields, Restricted Linear Congruence, Menon's identity, Elementary Symmetric Sums.}

   \subclass{11A05 $\cdot$ 11A07 $\cdot$ 11A25}
\end{abstract}
 
\section{Introduction}
Recently, T\'oth~\cite{toth2022another} studied a generalization of the Euler totient function corresponding to the set $\{e_1, e_k\}$, where $e_1(\mathbf{x}) = x_1+\dots+x_k$ and $e_k(\mathbf{x}) = x_1\dots x_k$ are the first and $k$th elementary symmetric sums and $\mathbf{x}$ denotes $(x_1,x_2,\dots, x_k)$. This generalizes earlier results of Arai~\cite{arai1961e1460} and Carlitz~\cite[Solution to problem E1460]{carlitz1961solution} on congruence restrictions imposed on these polynomials. T\'oth obtained the product formula
\[
\phi_{\{e_1,e_k\}}(n) = n^k \prod_{p \mid n} \left(1 - \frac{1}{p}\right) \left(\left(1-\frac{1}{p}\right)^k - \frac{(-1)^k}{p^k}\right),
\]
where
\begin{equation}\label{def: toth e1ek}
\phi_{\{e_1,e_k\}}(n) \coloneqq \bigl|\{(a_1, \dots, a_k) \in \mathbb{Z}_n^k : \gcd(e_1(\mathbf{a}), n)=\gcd(e_k(\mathbf{a}),n) = 1 \}\bigr|.    
\end{equation}
Futhermore, Csizm\'azia and T\'oth~\cite{csizmazia2025generalizationseulersvarphifunctionrespect} introduced another generalization associated with a set of multivariable polynomials $F = \{f_1, \dots, f_m\}$ over $\mathbb{Z}_n^k$, defining
\begin{equation}\label{eqn:varphiF}
\varphi_F(n) \coloneqq \bigl|\{(a_1, \dots, a_k) \in \mathbb{Z}_n^k : \gcd(f_1(\mathbf{a}), \dots, f_m(\mathbf{a}), n) = 1 \}\bigr|,
\end{equation}
where $\mathbf{a} = (a_1, \dots, a_k)$. They proved that $\varphi_F$ is multiplicative with product formula~\cite[Theorem~2.1]{csizmazia2025generalizationseulersvarphifunctionrespect}
\begin{equation}\label{eqn:productformNFp}
\varphi_F(n)=n^k\prod_{p\mid n}\left(1-\frac{N_F(p)}{p^k}\right),
\end{equation}
where $N_F(p)$ denotes the number of common zeros of $F$ over $\mathbb{F}_p^k$. This extends earlier generalizations by  Stevens~\cite{stevens1971generalizations}.

A natural strengthening of equation (\ref{eqn:varphiF}) requires each $f_i(\mathbf{a})$ individually coprime to $n$:
\begin{equation}\label{eqn:phi}
\phi_F(n) \coloneqq \bigl|\{(a_1, \dots, a_k) \in \mathbb{Z}_n^k : \gcd(f_i(\mathbf{a}), n) = 1 \ \forall\, i=1,\dots,m \}\bigr|,
\end{equation}
which is a generalization of the totient function given in equation (\ref{def: toth e1ek}) studied by T\'{o}th.
Clearly, $\phi_F(n) \leq \varphi_F(n)$, and if $f = \prod f_i$ then $\phi_F(n) = \varphi_{\{f\}}(n)$, implying multiplicativity of $\phi_F$. Similar to the product formula (\ref{eqn:productformNFp}) one can easily prove 
\[
\phi_F(n)=n^k\prod_{p\mid n}\left(1-\frac{\widetilde{N}_F(p)}{p^k}\right),
\]
where $\widetilde{N}_F(p)$ counts the union of zeros of $F$ over $\mathbb{F}_p^k$. In Section~\ref{sect: phi_F}, we relate $N_F(p)$ and $\widetilde{N}_F(p)$---and hence $\varphi_F$ and $\phi_F$---via inclusion-exclusion (Theorem~\ref{thm: relation_tot}), yielding explicit product formulas for both.

In this paper, we consider the elementary symmetric polynomials defined as follows:
\[
e_j(x_1, \dots, x_k) = \sum_{1 \leq i_1 < \cdots < i_j \leq k} x_{i_1} \cdots x_{i_j},
\]
So, we use the alternate notation for $J=\{j_1,\dots, j_m\}\subseteq \{1,2,\dots,k\}$,
\begin{equation}\label{eq: varphi_J defn}
\varphi_J(n):=|\{(x_1,\dots, x_k): \gcd(e_{j_1}(x_1,\dots, x_k),\dots, e_{j_m}(x_1,\dots, x_k), n)=1\}|,    
\end{equation}
and,
\begin{equation}\label{eq: phi_J defn}
\phi_J(n)=\left|\left\{(x_1,\dots, x_k): \gcd\left(\prod_{j\in J}e_j(x_1,\dots, x_k), n\right)=1\right\}\right|.
\end{equation}

Our main contributions in this paper are as follows. We extend the analysis to the second elementary symmetric polynomial
\[
e_2(\mathbf{x}) = \sum_{1 \leq i < j \leq k} x_i x_j,
\]
deriving explicit product formulas for $\phi_F(n)$ and $\varphi_F(n)$ when $F$ includes $e_2$ and also prove related Menon-type identities for these totients. Our work further connects to the theory of restricted linear congruences---counting solutions to
\[
a_1 x_1 + \dots + a_k x_k \equiv b \pmod{n}
\]
subject to certain $\gcd$ conditions---as studied by Lehmer~\cite{lehmer1913certain}, Dixon~\cite{dixon1960finite}, Rearick~\cite{rearick1963linear}, and Bibak et al.~\cite{bibak2017restricted}, among others. In particular, T\'oth's $\phi_{\{e_1, e_k\}}(n)$ is always divisible by $\varphi(n)$, with the quotient counting solutions to $\sum x_i \equiv m \pmod{n}$ under the condition $\gcd(\prod x_i, n) = \gcd(m, n) = 1$. We establish analogous divisibility results for $e_2$-based generalized totients, thereby revealing deeper structural links between polynomial zeros over finite fields, restricted linear congruences, and generalized Euler totients.

\section{Results on the Function \texorpdfstring{$\varphi_J(\cdot)$}{the weaker totient function}}

 In this section, we derive explicit product forms for the functions $\varphi_{\{2\}}(n)$, $\varphi_{\{1,2\}}(n)$, $\varphi_{\{2,k\}}(n)$ and $\varphi_{\{1,2,k\}}(n)$ (See equation (\ref{eq: varphi_J defn})). For this purpose, we require a result on the number of solutions of an equation involving a quadratic form over a finite field $\mathbb{F}_p$,
A quadratic form $f$ in $k$ indeterminates over a finite field $\mathbb{F}_p$ is a homogeneous polynomial in $\mathbb{F}_p[x_1,\dots, x_k]$ of degree 2 or the zero polynomial. The quadratic form $f$ can be written as $f(x)=x^TAx$ for a symmetric matrix $A$. The quadratic form $f$ is non-degenerate if the associated symmetric matrix $A$ is non-singular.

The result for the product form for $\varphi_J(n)$ follows by identifying the appropriate matrix and determining the values of $k$ and $p$ for which it becomes degenerate.

\begin{thm}\label{thm: e2}
    \[
    \varphi_{\{2\}}(n)=n^k\prod_{p\mid n}\left(1-\frac{N_k(\{2\},p)}{p^k}\right)
    \]
    Where,
    for prime $p>2$ and $k>1$, the number of solutions $N_k(\{2\},p)$ of $e_2(a_1,\dots, a_k)\equiv 0\pmod{p}$ is:
    $$
    N_k(\{2\},p)=\begin{cases}
        p^{k-1}+(p-1)p^{(k-1)/2}\eta((-1)^{(k-1)/2}(1-\gcd(k-1,p))), &k\text{ odd},\\
        p^{k-1}+(p-1)p^{(k-2)/2}\eta((-1)^{k/2+1}(k-1)), &k\text{ even}.
    \end{cases}
    $$
    Also,
    $$
    N_k(\{2\},2)=\frac{1}{4}\left(2^{k+1}+2(\sqrt{2})^{k+1}\cos\left(\frac{\pi}{4}-\frac{k\pi}{4}\right)\right).
    $$
\end{thm}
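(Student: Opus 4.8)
The plan is to recognize $e_2$ as a quadratic form, read off its Gram matrix, and split into the non-degenerate and degenerate cases according to whether $p \mid k-1$, handling the prime $p=2$ by a separate combinatorial count. Throughout I would count zeros of $2e_2 = (\sum_i x_i)^2 - \sum_i x_i^2$ rather than of $e_2$; this is legitimate because $2$ is invertible modulo the odd prime $p$, so the two forms have identical zero sets. The symmetric matrix of $2e_2$ is $M = J - I$, where $J$ is the all-ones $k\times k$ matrix, so $M\mathbf 1 = (k-1)\mathbf 1$ while $Mv = -v$ for every $v$ with $\mathbf 1^{T}v = 0$. Hence the eigenvalues are $k-1$ (once) and $-1$ (with multiplicity $k-1$), giving $\det M = (-1)^{k-1}(k-1)$; since $p$ is odd, $M$ is singular over $\mathbb F_p$ precisely when $p \mid k-1$.

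First I would treat the non-degenerate case $p \nmid k-1$ by applying Theorem \ref{thm: zeros of quadratic forms} directly with $b = 0$. For odd $k$ the odd branch contributes $\eta(0)=0$, leaving $N_k(e_2,p)=p^{k-1}$, which matches the stated formula because $\gcd(k-1,p)=1$ forces the argument of $\eta$ to vanish. For even $k$ the even branch gives $p^{k-1}+(p-1)p^{(k-2)/2}\eta((-1)^{k/2}\det M)$, and substituting $\det M=-(k-1)$ collapses the character argument to $(-1)^{k/2+1}(k-1)$, as required. (Had I used the matrix of $e_2$ rather than of $2e_2$, an extra factor $\eta(2)^{\pm k}$ would appear, but it equals $1$ for even $k$ and is annihilated by the $b=0$ factor for odd $k$, so the scaling is immaterial.)

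The substantive step is the degenerate case $p \mid k-1$, where Theorem \ref{thm: zeros of quadratic forms} does not apply verbatim. Here $\ker M = \operatorname{span}(\mathbf 1)$, so the rank is $k-1$. I would change basis to $v_i = e_i - e_{i+1}$ $(1\le i\le k-1)$ together with $\mathbf 1$: the relations above make $\mathbf 1$ orthogonal to each $v_i$ with $\mathbf 1^{T}M\mathbf 1 = k(k-1)\equiv 0$, while on $\operatorname{span}(v_1,\dots,v_{k-1})$ the form equals $-G$, where $G=(v_i^{T}v_j)$ is tridiagonal with $2$ on the diagonal and $-1$ off it. Since $\det G = k \equiv 1 \pmod p$, the non-degenerate part $g$ has $k-1$ variables and determinant $\det(-G)=(-1)^{k-1}k\equiv(-1)^{k-1}$ up to squares. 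The zeros of a rank-$r$ form in $k$ variables number $p^{\,k-r}$ times those of its non-degenerate part, so $N_k(e_2,p)=p\cdot N_g(0)$. Applying Theorem \ref{thm: zeros of quadratic forms} to $g$: for odd $k$ (so $k-1$ is even) the even branch yields $p\bigl(p^{k-2}+(p-1)p^{(k-3)/2}\eta((-1)^{(k-1)/2})\bigr)$, matching the claim since $1-\gcd(k-1,p)=1-p\equiv 1$; for even $k$ (so $k-1$ is odd) the odd branch again has $b=0$, giving $N_g(0)=p^{k-2}$ and $N_k(e_2,p)=p^{k-1}$, consistent with $\eta(k-1)=\eta(0)=0$ in the stated formula. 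I expect the main obstacle to be pinning down this determinant modulo squares, i.e. the identification $\det G = k$, as that is precisely what converts the rank-deficient situation back into a clean application of the quadratic-form count.

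Finally, for $p=2$ the quadratic-form theorem is unavailable and I would argue combinatorially. For $a \in \mathbb F_2^{k}$ of Hamming weight $w$ one has $e_2(a)=\binom{w}{2}$, so $e_2(a)\equiv 0 \pmod 2$ iff $w\equiv 0,1 \pmod 4$, whence $N_k(e_2,2)=\sum_{w\equiv 0,1\ (4)}\binom{k}{w}$. I would evaluate this by the roots-of-unity filter with $\omega=i$: the $j=2$ term vanishes, and writing $1\pm i=\sqrt 2\,e^{\pm i\pi/4}$ reduces the remaining terms to $\tfrac14\bigl(2^{k+1}+2^{k/2+1}(\cos\frac{k\pi}{4}+\cos(\frac{k\pi}{4}-\frac\pi2))\bigr)$. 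A sum-to-product step turns the two cosines into $\sqrt2\cos(\frac{k\pi}{4}-\frac\pi4)$, and collecting powers of $2$ gives exactly $\tfrac14\bigl(2^{k+1}+2(\sqrt 2)^{k+1}\cos(\frac\pi4-\frac{k\pi}{4})\bigr)$. This case is computationally different but routine once the reduction $e_2(a)=\binom{w}{2}$ is in hand.
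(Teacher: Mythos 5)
Your proposal is correct and follows essentially the same route as the paper: identify $e_2$ with a (scaled) quadratic form of matrix $J-I$, split on whether $p\mid k-1$, reduce the degenerate case to a non-degenerate form on a complement of $\ker(J-I)=\operatorname{span}(\mathbf 1)$ before applying Theorem \ref{thm: zeros of quadratic forms}, and count the $p=2$ case via Hamming weight and a roots-of-unity filter. The only differences are cosmetic --- you work with $2e_2$ instead of $e_2$ and pick the basis $e_i-e_{i+1}$ (giving the tridiagonal matrix with $\det G=k$) where the paper uses $e_j-e_1$ (giving $2^{-1}(I+J)$ with the same determinant up to squares) --- and these do not change the argument.
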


\begin{proof}
For $p>2$, the quadratic form $e_2$ has matrix
$$
A= \begin{bmatrix}
0 &2^{-1} &\cdots &2^{-1}\\
2^{-1} & 0 & \cdots &2^{-1}\\
\vdots & \vdots & \ddots & \vdots\\
2^{-1} & 2^{-1} & \cdots & 0
\end{bmatrix}_{k\times k}.
$$

When $A$ is non-degenerate: $\det A=(-1)^{k-1}2^{-k}(k-1)\neq0$. By lemma~\ref{lem: zeros of quadratic forms},
$$
N_k(\{2\}, p)=\begin{cases}
p^{k-1}, &k\text{ odd}\\
p^{k-1}+(p-1)p^{(k-2)/2}\eta((-1)^{k/2+1}(k-1)), &k\text{ even}.
\end{cases}
$$

When $A$ is degenerate: $\det A=0$, so $k\equiv1\pmod{p}$. Nullspace is span$\{(1,\dots,1)^T\}$. Choose basis $\{e_2,\dots,e_k\}$ for complement, with $e_j$ having $-1$ at pos.~1, $1$ at pos.~$j$. Then $\overline{A}=(e_i^TAe_j)$ is $(k-1)\times(k-1)$ non-degenerate: $e_i^TAe_j=1$ ($i=j$), $2^{-1}$ ($i\neq j$), $\det\overline{A}=k/2^{k-1}\neq0$. By lemma~\ref{lem: zeros of quadratic forms},
$$
N_k(\{2\},p)=\begin{cases}
p^{k-1}+(p-1)p^{(k-1)/2}\eta((-1)^{(k-1)/2}), &k\text{ odd},\\
p^{k-1}, &k\text{ even}.
\end{cases}
$$

Combining cases yields the formula (adjusting Legendre symbol via $\gcd(k-1,p)$).

For $p=2$, let $v\in\{0,1\}^k$ have $j$ ones: $v^TA v=j(j-1)/2\equiv0\pmod{2}$ iff $j\equiv0,1\pmod{4}$. Thus,
$$
N_k(\{2\},2)=\sum_{\substack{j=0\\j\equiv0,1\pmod{4}}}^k\binom{k}{j}.
$$
Using roots of unity filter with $\omega_4=i$,
$$
\sum_{j\equiv0\pmod{4}}\binom{k}{j}=\frac{1}{4}\bigl(2^k+(1+i)^k+(1-i)^k\bigr)=\frac{1}{4}\bigl(2^k+2(\sqrt{2})^k\cos(k\pi/4)\bigr),
$$
$$
\sum_{j\equiv1\pmod{4}}\binom{k}{j}=\frac{1}{4}\bigl(2^k+2(\sqrt{2})^k\sin(k\pi/4)\bigr).
$$
Summing gives the formula.

Now, The product form is readily obtained by using equation (\ref{eqn:productformNFp}) proved in \cite[Theorem 2.1]{csizmazia2025generalizationseulersvarphifunctionrespect}.
\end{proof}

\begin{thm}\label{thm: e1,e2}
     \[
    \varphi_{\{1,2\}}(n)=n^k\prod_{p\mid n}\left(1-\frac{N_k(\{1,2\},p)}{p^k}\right)
    \]
    Where, for prime $p>2$ and $k>1$, the number of solutions $N_k(\{1,2\},p)$ of
    \begin{align*}
        e_j(a_1,\dots, a_k)&\equiv 0\pmod{p}, \hspace{1cm}j\in \{1,2\}
    \end{align*}
    is:
    $$
    N_k(\{1,2\},p)=\begin{cases}
       p^{k-2}+(p-1)p^{(k-3)/2}\eta\bigl((-1)^{(k-1)/2}k\bigr), &k\text{ odd},\\
       p^{k-2}+(p-1)p^{(k-2)/2}\eta\bigl((-1)^{k/2}(1-\gcd(k,p))\bigr), &k\text{ even}.
    \end{cases}
    $$
    Also,
    $$
    N_k(\{1,2\},2)=\frac{1}{4}\left(2^k+2(\sqrt{2})^k\cos\frac{k\pi}{4}\right).
    $$
\end{thm}

\begin{proof}
Under $e_1\equiv0\pmod{p}$, substitute $a_k\equiv -e_1(a_1,\dots,a_{k-1})\pmod{p}$ into $e_2\equiv0$ to obtain the quadratic form $
       a_1^2+\dots+a_{k-1}^2+e_2(a_1,\dots, a_{k-1})\equiv 0\text{ mod }p,$ in $(a_1,\dots,a_{k-1})$, with associated matrix $A$ satisfying $\det A=2^{1-k}k$.

When A is non-degenerate ($\det A\neq0$): lemma~\ref{lem: zeros of quadratic forms} gives
\[
N_k(\{1,2\},p)=\begin{cases}
p^{k-2}+(p-1)p^{(k-3)/2}\eta\bigl((-1)^{(k-1)/2}k\bigr), &k\text{ odd},\\
p^{k-2}, &k\text{ even}.
\end{cases}
\]

When A is degenerate ($\det A=0$, i.e., $k\equiv0\pmod{p}$): As in proof of theorem~\ref{thm: e2}, restrict to $(k-2)$-dimensional non-degenerate complement ($\det\overline{A}=(-2)^{2-k}(k-1)\neq0$), yielding
\[
N_k(\{1,2\},p)=\begin{cases}
p^{k-2}, &k\text{ odd},\\
p^{k-2}+(p-1)p^{(k-2)/2}\eta\bigl((-1)^{k/2}\bigr), &k\text{ even}.
\end{cases}
\]

The stated formula follows by combining cases and adjusting the Legendre symbol via $\gcd(k,p)$. The $p=2$ case follows analogously to theorem ~\ref{thm: e2} via roots-of-unity filter.

Now, the form of the product is easily obtained using equation (\ref{eqn:productformNFp}) proved in \cite[Theorem 2.1]{csizmazia2025generalizationseulersvarphifunctionrespect}.
\end{proof}
We state the lemma that was used to prove theorems (\ref{thm: e2}) and (\ref{thm: e1,e2}).
\begin{lem}\cite[Theorems 6.26, 6.27]{lidl1997finite}\label{lem: zeros of quadratic forms}
     Let $f$ be a non-degenerate quadratic form over $\mathbb{F}_p$, $p$ odd, in $k$ indeterminates. Then for $b\in \mathbb{F}_p$ the number of solutions $N(b)$ of $f(x_1,\dots, x_k)=b$ in $\mathbb{F}_p^n$ is \[
     N(b)=\begin{cases}
         p^{k-1}+p^{(k-1)/2}\eta((-1)^{(k-1)/2}b\Delta), &\text{ if }k\text{ is }odd\\
         p^{k-1}+\nu(b)p^{(k-2)/2}\eta((-1)^{k/2}\Delta), &\text{ if }k\text{ is }even.
     \end{cases}
     \]
     where $\eta$ ia the quadratic character of $\mathbb{F}_p$ and $\Delta=\text{det}(f)$. The integer valued function $\nu$ on $\mathbb{F}_p$ is defined by $\nu(b)=-1$ for $b\in \mathbb{F}_p^*$ and $\nu(0)=p-1$. 
\end{lem}
\begin{thm}\label{thm: including k}
Let $J\subseteq \{1,2,\dots, k-1\}, J\neq \varnothing$ then if the number of solutions $N_k(J,p)$ of \[
e_j(a_1,\dots, a_k)\equiv 0\text{ mod }p,
\]
for $j\in J$, is known then the number of solutions of $N_k(J\cup\{k\},p)$ is given by
\begin{equation}\label{eqn: recurrence add k}
N_k(J\cup \{k\}, p)=\sum_{j=1}^k(-1)^{j+1}\binom{k}{j}N_{k-j}(J/\{k-j+1,\dots, k-1\},p)    
\end{equation}

where we define $N_0(\varnothing,p)=1$. Here $A/B$ denotes the set difference of $A$ and $B$.

\end{thm}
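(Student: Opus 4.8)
The plan is to reduce the computation to a single inclusion--exclusion over the set of coordinates that are forced to vanish modulo $p$, exactly in the spirit of the proofs of Theorem~\ref{thm: e2, ek} and the theorem following it. The starting observation is that $e_k(a_1,\dots,a_k)=a_1a_2\cdots a_k$, so the newly added constraint $e_k\equiv 0\pmod p$ holds precisely when at least one coordinate satisfies $a_i\equiv 0\pmod p$. Thus $N_k(J\cup\{k\},p)$ counts those tuples already satisfying the system $\{e_j\equiv 0:j\in J\}$ that additionally have some vanishing coordinate.

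First I would fix the solution set $B$ of the ``old'' system $\{e_j\equiv 0:j\in J\}$, which by hypothesis has cardinality $N_k(J,p)$, and for each index $i$ let $A_i$ be the set of tuples with $a_i\equiv 0\pmod p$. Then $N_k(J\cup\{k\},p)=\bigl|B\cap\bigcup_{i=1}^k A_i\bigr|$, and the inclusion--exclusion principle yields
\[
N_k(J\cup\{k\},p)=\sum_{\varnothing\ne T\subseteq\{1,\dots,k\}}(-1)^{|T|+1}\,\bigl|B\cap\textstyle\bigcap_{i\in T}A_i\bigr|.
\]

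The key algebraic step is to evaluate $\bigl|B\cap\bigcap_{i\in T}A_i\bigr|$ for a fixed $T$ with $|T|=j$. Setting $a_i=0$ for all $i\in T$ annihilates every monomial of $e_l$ that involves an index in $T$, so $e_l(a_1,\dots,a_k)$ restricts to the elementary symmetric polynomial $e_l$ in the remaining $k-j$ variables. Since $e_l$ in $k-j$ variables is identically zero once $l>k-j$, the constraints $e_l\equiv 0$ with $l\in J$ and $l\geq k-j+1$ become vacuous and are discarded; because $J\subseteq\{1,\dots,k-1\}$, the discarded indices are exactly $J\cap\{k-j+1,\dots,k-1\}$, and the surviving system is indexed by $J\setminus\{k-j+1,\dots,k-1\}$ on $k-j$ variables. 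Hence $\bigl|B\cap\bigcap_{i\in T}A_i\bigr|=N_{k-j}(J\setminus\{k-j+1,\dots,k-1\},p)$, a quantity depending only on $j=|T|$ and not on $T$ itself, by the symmetry of the $e_l$. Grouping the $\binom{k}{j}$ subsets of each size $j$ then collapses the sum into \eqref{eqn: recurrence add k}. The extreme term $j=k$ forces all coordinates to vanish, leaving no free variables and (since $\{1,\dots,k-1\}\supseteq J$) no surviving constraints, consistent with the stated convention $N_0(\varnothing,p)=1$.

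The step I expect to require the most care is the second one: verifying cleanly that zeroing a subset of variables turns $e_l$ into the lower-order symmetric polynomial on the complement, and pinning down precisely which constraints drop, so that the index set $J\setminus\{k-j+1,\dots,k-1\}$ comes out correctly for every $j$ (including the small cases $k-j<\max J$, where several constraints vanish simultaneously). Everything else is bookkeeping in the inclusion--exclusion. As a sanity check one can specialize to $J=\{2\}$ and recover Theorem~\ref{thm: e2, ek}: the $j=k-1$ term contributes $(-1)^k k\,N_1(\varnothing,p)=(-1)^k kp$ and the $j=k$ term contributes $(-1)^{k+1}$, which together form the tail $(-1)^k(kp-1)$.
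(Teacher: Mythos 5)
Your proof is correct and follows exactly the route the paper intends: the paper states this theorem without a written proof (``Using this ideas, the following can be said''), and your inclusion--exclusion over the set of coordinates forced to vanish, with the reduction of $e_l$ to the complementary variables and the discarding of the vacuous constraints $l>k-j$, is precisely the generalization of the arguments given for Theorem~\ref{thm: e2, ek} and its successor. Your sanity check recovering the tail $(-1)^k(kp-1)$ for $J=\{2\}$ confirms the bookkeeping.
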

\begin{proof}
    Since $e_k(a_1,\dots,a_k)\equiv 0\text{ mod }p$ means that atleast one $a_j\equiv 0\text{ mod }p$ so we count the union of all such solutions using the inclusion-exclusion principle. 
\end{proof}
\begin{cor}\label{thm: e2, ek}
    For prime $p>2$ and $k>2$, the number of solutions, $N_k(\{2,k\},p)$ of the congruence:
    \begin{align*}
        &e_j(a_1,\dots, a_k)\equiv 0\text{ mod }p, \hspace{1cm}j\in \{2,k\},
    \end{align*} is:
    \[
    N_k(\{2,k\},p)=\sum_{j=1}^{k-2}(-1)^{j+1}\binom{k}{j}N_{k-j}(\{2\},p)+(-1)^{k}(kp-1)
    \]
    Also,\[
    N_k(\{2,k\},2)=\frac{1}{4}\left(2^{k+1}+2(\sqrt{2})^{k+1}\cos\left(\frac{\pi}{4}-\frac{k\pi}{4}\right)\right)-1.
    \]
\end{cor}
\begin{proof}
    This follows from theorem (\ref{thm: including k}).
    Notice however that when the number of variable equivalent to $0$ is $k-1$ we have $p$ solutions. Again, when the number of such variables is $k$ then there is just $1$ solution.
    The case $p=2$ follows by a similar roots of unity argument as in theorem (\ref{thm: e2}).
\end{proof}
\begin{cor}
    For prime $p>2$ and $k>2$, the number of solutions, $N_k(\{1,2,k\},p)$ of the congruence:
    \begin{align*}
        &e_j(a_1,\dots, a_k)\equiv 0\text{ mod }p, \hspace{1cm}j\in \{1,2,k\},
    \end{align*} is:
    \[
    N_k(\{1,2,k\},p)=\sum_{j=1}^{k-2}(-1)^{j+1}\binom{k}{j}N_{k-j}(\{1,2\},p)+(-1)^k(k-1)
    \]
    Also,\[
    N_k(\{1,2,k\},2)=\frac{1}{4}\left(2^k+2(\sqrt{2})^k\cos\frac{k\pi}{4}\right)-1.
    \]
\end{cor}
\begin{proof}
    The proof is similar to the one for theorem (\ref{thm: e2, ek}). Only when there are $k-1$ variables are equivalent to $0 \text{ mod }p$ then the number of solutions is 1.
\end{proof}

\section{Results on the Function \texorpdfstring{$\phi_J(\cdot)$}{the stronger totient function}}\label{sect: phi_F}
In the introduction, we had defined the function $\phi_F(\cdot)$ for a set of polynomials $F=\{f_1,\dots, f_m\}$ with integer coefficients.
First, we note a connection between the functions $\varphi_F(\cdot)$ and $\phi_F(\cdot)$.
\begin{thm}[Relation between totient functions]\label{thm: relation_tot}
\[
\phi_F(p^k)=\sum_{\substack{J \subseteq F \\ J \ne \varnothing}}(-1)^{|J|+1}\varphi_J(p^k), \quad \varphi_F(p^k)=\sum_{\substack{J \subseteq F \\ J \ne \varnothing}}(-1)^{|J|+1}\phi_J(p^k).
\]
\end{thm}

\begin{proof}
By \eqref{eqn:productformNFp}, $\varphi_F(p^k)=p^k(1-N_F(p)/p^k)$. Let $f=\prod_{f_i\in F}f_i$. Then $N_{\{f\}}(p)=\sum_{J\subseteq F\setminus\emptyset}(-1)^{|J|+1}N_J(p)$ by inclusion-exclusion. Thus,
$$
\varphi_{\{f\}}(p^k)=p^k\left(1-\frac{\sum_{J\subseteq F}(-1)^{|J|+1}N_J(p)}{p^k}\right)=\sum_{\substack{J\subseteq F\\ J \ne \varnothing}}(-1)^{|J|+1}\varphi_J(p^k).
$$
The reverse follows symmetrically.
\end{proof}

In what follows, we use the alternate notation $\phi_J(n)$ defined in equation (\ref{eq: phi_J defn}).

\begin{thm}
    When $1\in J\subseteq\{1, \dots, k\}$, the following Menon's identity holds:\[
    \sum_{ (a_1,\dots,a_k)\in S}f(\gcd(a_1+\dots+a_k-1,n))=\phi_J(n)\sum_{d\mid n}\frac{\mu*f(d)}{\varphi(d)}
    \]
    where, $S=\left\{(x_1,\dots, x_k): \gcd\left(\prod_{j\in J}e_j(x_1,\dots, x_k), n\right)=1\right\}$.
\end{thm}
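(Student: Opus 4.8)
The plan is to reduce the stated identity to the classical (generalized) Menon identity over the units modulo $n$ by exploiting the homogeneity of the elementary symmetric polynomials. First I would record that, since $1 \in J$, the factor $e_1(x_1,\dots,x_k) = x_1 + \cdots + x_k$ appears in the product $\prod_{j \in J} e_j$; hence every tuple $(a_1,\dots,a_k) \in S$ automatically satisfies $\gcd(a_1 + \cdots + a_k, n) = 1$. In particular $|S| = \phi_J(n)$ by the very definition of $\phi_J$, and the residue $m := a_1 + \cdots + a_k \bmod n$ is always a unit. This lets me rewrite the left-hand side by grouping tuples according to this residue: setting $c(m) := |\{(a_1,\dots,a_k) \in S : a_1 + \cdots + a_k \equiv m \pmod n\}|$, we have
\[
\sum_{(a_1,\dots,a_k) \in S} f(\gcd(a_1 + \cdots + a_k - 1, n)) = \sum_{\substack{m \bmod n\\ \gcd(m,n)=1}} c(m)\, f(\gcd(m-1,n)).
\]

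The crucial step is to show that $c(m)$ does not depend on the unit $m$. For this I would use the scaling map $\sigma_\lambda : (a_1,\dots,a_k) \mapsto (\lambda a_1,\dots,\lambda a_k)$ attached to a unit $\lambda \in (\mathbb{Z}/n\mathbb{Z})^*$. Because each elementary symmetric polynomial is homogeneous of degree $j$, we have $e_j(\lambda a_1,\dots,\lambda a_k) = \lambda^j e_j(a_1,\dots,a_k)$, and multiplication by a unit preserves the gcd with $n$; hence $\sigma_\lambda$ maps $S$ bijectively onto itself, with inverse $\sigma_{\lambda^{-1}}$. Since $\sigma_\lambda$ sends a tuple with coordinate-sum $m$ to one with coordinate-sum $\lambda m$, it restricts to a bijection between the fibers counted by $c(m)$ and $c(\lambda m)$. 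As the units act transitively on themselves by multiplication, $c(m)$ is a constant $C$ on the group of units, and comparing $\sum_{\gcd(m,n)=1} c(m) = |S| = \phi_J(n)$ with the $\varphi(n)$ admissible residues gives $C = \phi_J(n)/\varphi(n)$.

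With $c(m) \equiv C$ established, the left-hand side becomes $C \sum_{\gcd(m,n)=1} f(\gcd(m-1,n))$, and the inner sum is precisely the classical generalized Menon identity
\[
\sum_{\substack{m \bmod n\\ \gcd(m,n)=1}} f(\gcd(m-1,n)) = \varphi(n) \sum_{d \mid n} \frac{(\mu * f)(d)}{\varphi(d)}.
\]
Substituting $C = \phi_J(n)/\varphi(n)$ collapses the factor $\varphi(n)$ and yields the claimed formula. I would either cite this generalized Menon identity as known or include a short self-contained derivation, writing $f = \mathbf{1} * (\mu * f)$ and counting the residues $m$ coprime to $n$ with a prescribed value of $\gcd(m-1,n)$.

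The main obstacle is the constancy of $c(m)$; everything else is bookkeeping or a citation. The scaling argument hinges on two facts that must be stated carefully: the homogeneity relation $e_j(\lambda a_1,\dots,\lambda a_k) = \lambda^j e_j(a_1,\dots,a_k)$, and the invariance of $S$ under $\sigma_\lambda$, which in turn uses that $\gcd(\lambda^j t, n) = \gcd(t, n)$ for a unit $\lambda$. One should also verify that $C = \phi_J(n)/\varphi(n)$ is genuinely an integer — this is automatic here, since it equals the common fiber size $c(m)$ — and confirm that transitivity of the action forces a single common value rather than merely equality of $c$ within a single orbit.
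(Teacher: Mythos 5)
Your proposal is correct and follows essentially the same route as the paper: both decompose the sum over $S$ according to the residue of $a_1+\cdots+a_k$ (a unit since $1\in J$), both establish that each fiber has constant size $\phi_J(n)/\varphi(n)$ via the unit-scaling bijection that the paper invokes by reference to its Theorem \ref{thm: even}, and both finish by citing the generalized Menon identity $\sum_{\gcd(m,n)=1} f(\gcd(m-1,n))=\varphi(n)\sum_{d\mid n}(\mu*f)(d)/\varphi(d)$. Your write-up simply makes explicit the homogeneity and transitivity details that the paper leaves implicit.
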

\begin{proof}
    Note that,
    \begin{align*}
        \sum_{(a_1,\dots, a_k)\in S}f(\gcd(a_1+\dots+a_k-1, n))=\sum_{\substack{a=1\\\gcd(a,n)=1}}^nf(\gcd(a-1, n))\sum_{\substack{(a_1,\dots, a_k)\in S\\a_1+\dots+a_k=a\text{ mod }n}}1.
    \end{align*}
    It can be seen with the help of simple mapping argument, as given in theorem (\ref{thm: even}), that the inner sum is fixed for all $a$ such that $\gcd(a, n)=1$ which is nothing but:\[
    \sum_{\substack{(a_1,\dots, a_k)\in S\\a_1+\dots+a_k=a\text{ mod }n}}1=\frac{\phi_J(n)}{\varphi(n)}
    \]
    And the rest follows the a generalization of the Menon's identity given in \cite[Theorem 4.2]{toth2021proofs}.
    \end{proof}

    We would like to point out here that a product form the function $\phi_{\{1,k\}}(n)$ is known through the work of T\'{o}th \cite{toth2022another}. It is evident that the function $\phi_{\{k-1,k\}}$ should have the same form:
    \[
    \phi_{\{k-1,k\}}(n)=\phi_{\{1,k\}}(n)=n^k\prod_{p\mid n}\left(1-\frac{1}{p}\right)\left(\left(1-\frac{1}{p}\right)^k - \frac{(-1)^k}{p^k}\right).
    \].
    In fact, more generally it is evident that the following should hold:\[
    \phi_{\{i,k\}}(n)=\phi_{\{k-i,k\}}(n).
    \]
\begin{thm}\label{thm: tot_func2{1,2}}
    For $k\geq 2$, the following identity holds for odd $n$:\[
    \phi_{\{1,2\}}(n)=n^k\prod_{p\mid n}\left(1-\frac{1}{p}-\frac{p-1}{p^2}+(p-1)\frac{h_k(p)}{p^k}\right)
    \]
    where\[
    h_k(p)=\begin{cases}
        p^{(k-3)/2}\eta((-1)^{(k-1)/2}k)-p^{(k-1)/2}\eta((-1)^{(k)}(1-\gcd(k-1, n))), &\text{ if }k\text{ is odd}\\
        p^{(k-2)/2}\eta((-1)^{k/2}(1-\gcd(k,p)))-
         p^{(k-2)/2}\eta((-1)^{k/2+1}(k-1)), &\text{ if }k\text{ is even}.
    \end{cases}
    \]
    Here $\eta(\cdot)$ is the quadratic character$\text{ mod } p$.
     Also,
    \[
    \phi_{\{1,2\}}(2^l)=2^{lk}\left(\frac{1}{4}-\frac{1}{2\left(\sqrt{2}\right)^k}\sin\frac{k\pi}{4}\right)
    \]
\end{thm}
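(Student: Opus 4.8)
The plan is to reduce the entire statement to the product form (\ref{eqn:productformNFp}) via the identity $\phi_{\{1,2\}}(n)=\varphi_{\{e_1e_2\}}(n)$, which holds because demanding $\gcd(e_1,n)=1$ and $\gcd(e_2,n)=1$ simultaneously is the same as $\gcd(e_1e_2,n)=1$ — this is precisely the observation $f=\prod_i f_i \Rightarrow \phi_F=\varphi_{\{f\}}$ recorded just after (\ref{eqn:phi}). Because the function is multiplicative, for odd $n$ it suffices to determine the local factor $1-N(p)/p^k$, where $N(p)=N_{\{e_1e_2\}}(p)$ counts the tuples of $\mathbb{F}_p^k$ on which $e_1e_2$ vanishes, i.e.\ those with $e_1\equiv 0$ or $e_2\equiv 0 \pmod p$.

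First I would apply inclusion--exclusion to this union,
\[
N(p)=N_k(e_1,p)+N_k(e_2,p)-N_k(e_1,e_2,p),
\]
where $N_k(e_1,p)=p^{k-1}$ since $e_1=x_1+\dots+x_k$ is a single nonzero linear form (its zero set is a hyperplane), while the two remaining counts are supplied verbatim by Theorem (\ref{thm: e2}) and by the theorem computing $N_k(e_1,e_2,p)$. Dividing by $p^k$, the three leading power terms $p^{k-1},p^{k-1},p^{k-2}$ collapse to $\tfrac1p+\tfrac1p-\tfrac{1}{p^2}$, so that
\[
1-\frac{N(p)}{p^k}=1-\frac{1}{p}-\frac{p-1}{p^2}+(\text{character terms}),
\]
reproducing the stated polynomial part. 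The quadratic-character contributions each carry a factor $(p-1)$, so they assemble into $(p-1)h_k(p)/p^k$; reading off the $\eta$-arguments from the odd-$k$ and even-$k$ branches of the two input theorems yields exactly the two cases of $h_k(p)$. The $\gcd(\cdot,p)$ factors inside $\eta$ silently absorb the degenerate/non-degenerate split, since $\eta(0)=0$ exactly when $p\nmid(k-1)$ (resp.\ $p\nmid k$), so no extra case analysis is needed here.

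For the prime-power case $n=2^l$ the product degenerates to the single factor $1-N(2)/2^k$, and again $N(2)=2^{k-1}+N_k(e_2,2)-N_k(e_1,e_2,2)$ with $N_k(e_1,2)=2^{k-1}$. Substituting the closed trigonometric forms of $N_k(e_2,2)$ and $N_k(e_1,e_2,2)$ from the earlier theorems, the main obstacle — indeed the only step needing real care — is the trigonometric bookkeeping: expanding $\cos(\tfrac{\pi}{4}-\tfrac{k\pi}{4})=\tfrac{1}{\sqrt2}\bigl(\cos\tfrac{k\pi}{4}+\sin\tfrac{k\pi}{4}\bigr)$ and absorbing the factor $(\sqrt2)^{k+1}$, the two cosine terms cancel and leave a single $\tfrac12(\sqrt2)^k\sin\tfrac{k\pi}{4}$. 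Collecting the powers of two as $2^{k-1}+2^{k-1}-2^{k-2}=3\cdot 2^{k-2}$ and dividing by $2^k$, I expect $1-N(2)/2^k=\tfrac14-\tfrac{1}{2(\sqrt2)^k}\sin\tfrac{k\pi}{4}$, which multiplied by $2^{lk}$ gives the claimed identity. The whole argument is thus an assembly of the four previously established counting theorems; the only delicate points are this angle manipulation at $p=2$ and keeping the $\eta$-arguments consistent across the parity branches.
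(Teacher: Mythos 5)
Your proposal is correct and is essentially the paper's own argument: the paper invokes Theorem (\ref{thm: relation_tot}) to write $\phi_{\{1,2\}}=\varphi_{\{1\}}+\varphi_{\{2\}}-\varphi_{\{1,2\}}$ at prime powers, which is exactly the inclusion--exclusion $N(p)=N_k(e_1,p)+N_k(e_2,p)-N_k(e_1,e_2,p)$ you perform directly via $\phi_{\{1,2\}}=\varphi_{\{e_1e_2\}}$, followed by the same substitutions of the earlier counting theorems. Your trigonometric bookkeeping at $p=2$ and the observation that the $\gcd$ arguments inside $\eta$ absorb the degenerate/non-degenerate split are both accurate, and in fact spelled out in more detail than the paper's proof.
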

\begin{proof}
    This theorem is obtained using theorem (\ref{thm: relation_tot}) as:
    \[
    \phi_{\{1,2\}}(n)=\varphi_{\{1\}}(n)+\varphi_{\{2\}}(n)-\varphi_{\{1,2\}}(n).
    \]
    This gives us:
    \[
    \phi_{\{1,2\}}(n)=n^k\prod_{p\mid n}\left(1-\frac{1}{p}+\frac{N_k(\{1,2\},p)-N_k(\{2\},p)}{p^k}\right)
    \]
    We get the result upon substituting the respective values of $N_k(\{2\},p)$ and $N_k(\{1,2\},p)$.
\end{proof}
It is also evident that using the same argument, we can obtain a product form for $\phi_{\{1,2,k\}}(n)$. However, a general form for this function is quite cumbersome.

\section{Solutions of Certain Restricted Linear Congruences}
For $a_1,\dots, a_k, b, n\in \mathbb{Z}$, $n\geq 1$, a linear congruence in $k$-unknowns $x_1,\dots, x_k$ is of the form 
\begin{equation}\label{eqn: lin_cong}
    a_1x_1+\dots+a_kx_k\equiv b\text{ mod }n.
\end{equation}
By a solution of (\ref{eqn: lin_cong}) we mean an ordered $k$-tuple of integers modulo $n$, denoted by $(x_1,\dots, x_k)$ that satisfies (\ref{eqn: lin_cong}). Additionally, there might restrictions on the set of the solutions of (\ref{eqn: lin_cong}). For instance, we might want to solve the linear congruence subject to certain conditions like $\gcd(x_i,n)=t_i, (1\leq i\leq k)$ where $t_1,\dots, t_k$ are positive divisors of $n$. This problem has been dealt by Bibak in \cite{bibak2017restricted}.

In what follows, we will outline a method that solves the restricted linear congruence problem subject to the $\gcd$ conditions on certain functions of the unknowns.
\begin{thm}\label{thm: even}
Let $F=\{f_1,\dots,f_m\}$ be homogeneous polynomials. Let $g_k(b,n)$ be the number of solutions to $a_1x_1+\dots+a_kx_k\equiv b\pmod{n}$ with $\gcd(f_i(\mathbf{x}),n)=1$ $\forall i$. Then $g_k(b,n)=g_k(\gcd(b,n),n)$.
\end{thm}

\begin{proof}
Let $d=\gcd(b,n)$, $r=b/d$. The map $(a_1,\dots,a_k)\mapsto(r^{-1}a_1,\dots,r^{-1}a_k)$ bijects solutions of $\sum a_ix_i\equiv b\pmod{n}$ to solutions of $\sum(r^{-1}a_i)x_i\equiv d\pmod{n}$, preserving the gcd conditions since $f_i$ are homogeneous.
\end{proof}

\begin{thm}\label{thm:rest_lin_cong}
Let $F=\{f_1,\dots,f_m\}$ be homogeneous polynomials. For $\gcd(b,n)=1$, let $g_k(n)$ be the number of solutions to $a_1x_1+\dots+a_kx_k\equiv b\pmod{n}$ with $\gcd(f_i(\mathbf{x}),n)=1$ $\forall i$. Then
$$
g_k(n)=\frac{\phi_{F'}(n)}{\varphi(n)},\quad F'=F\cup\{a_1x_1+\dots+a_kx_k\}.
$$
\end{thm}

\begin{proof}
The map $(x_1,\dots,x_k)\mapsto(bx_1,\dots,bx_k)$ bijects solutions of $\sum a_ix_i\equiv1\pmod{n}$ to solutions of $\sum a_ix_i\equiv b\pmod{n}$, preserving gcd conditions (homogeneity). Thus $g_k(n)=\phi_{F'}(n)/\varphi(n)$.
\end{proof}
\begin{exmp}
    For positive integers $m,n$ such that $\gcd(m,n)=1$, consider the following restricted linear congruence problem:
    \begin{equation}
        x_1+\dots+x_k\equiv m\text{ mod }n,
    \end{equation}
    subject to the constraint $\gcd(e_2(x_1,\dots, x_k), n)=1$.
    We use theorem (\ref{thm:rest_lin_cong}) and theorem (\ref{thm: tot_func2{1,2}}) to obtain the number of solutions $g_k(n)$ as:\[
    g_k(n)=\frac{\phi_{\{1,2\}}(n)}{\varphi(n)}.
    \]
    which is:
    \[
    g_k(n)=n^{k-1}\prod\left(\frac{1}{p}-\frac{1}{p^2}+\frac{h_k(p)}{p^k}\right).
    \]
    for odd $n$ and,
    \[
    g_k(2^l)=2^{lk-l+1}\left(\frac{1}{4}-\frac{1}{2\left(\sqrt{2}\right)^k}\sin\frac{k\pi}{4}\right).
    \]
\end{exmp}

To conclude, we remark that explicit product forms are derived here only for $e_2$-based totient generalizations; higher $e_l$ remain open. For $p=2$, $N_k(\{l\},2)$ counts solutions to $e_l(x_1,\dots,x_k)\equiv0\pmod{2}$. With $j$ ones, this is $\binom{j}{l}\equiv0\pmod{2}$, or zero for $j<l$.

Lucas' theorem \cite{lucas1878theorie} states that $
    \binom{j}{l}= \prod_{i=0}^m\binom{j_i}{l_i}\text{ mod }p,
    $
    where $l = \sum_{i=0}^ml_ip^i$ and $j = \sum_{i=0}^mj_ip^i$. This gives $\binom{j}{l}\equiv\prod\binom{j_i}{l_i}\pmod{2}$, so $\binom{j}{l}\equiv0\pmod{2}$ iff $l \wedge j \neq l$ where $\wedge$ is the bitwise AND operator on the binary representations of $l$ and $j$. Thus,
$$
N_k(\{l\},2)=\sum_{\substack{j=0\\j\wedge l\neq l}}^k\binom{k}{j}.
$$

\begin{exmp}
For $l=3$, $j\not\equiv3\pmod{4}$. Using roots-of-unity filter on $f(x)=x^2(1+x)^k$,
$$
\sum_{j\equiv2\pmod{4}}\binom{k}{j}=\frac{1}{4}\bigl(2^k-2(\sqrt{2})^k\cos(k\pi/4)\bigr).
$$
Combining with theorem~\ref{thm: e2} yields
$$
N_k(\{3\},2)=\frac{1}{4}\bigl(3\cdot2^k+2(\sqrt{2})^k\sin(k\pi/4)\bigr).
$$
\end{exmp}

\section*{Acknowledgement} 
The second author is supported by the SERB-MATRICS project (MTR/2023/000705)
from the Department of Science and Technology, India, for this work.

\section*{Data Availability}
Data sharing is not applicable to this article, as no datasets were generated or analyzed during the current study.

\bibliographystyle{alpha}
\bibliography{References}
\end{document}